\title{Homogeneous Fractional Embeddings}
\author{Pierre Inizan}
\address{IMCCE - Observatoire de Paris \\ 77, Avenue Denfert-Rochereau \\ 75014 Paris, France}
\email{pierre.inizan@imcce.fr}
\begin{document}

\maketitle

%%%%%%%%%%%%%%%%%%%%%%%%%%%%%%%%%%%%%%%%%%%%%%%%%%%%%%

\newcommand{\transposee}[1]{{\vphantom{#1}}^{\mathit t}{#1}}
\newcommand{\Drl}[3]{{}_{#1} \mathcal{D}_{#2}^{#3}}
\newcommand{\Dc}[3]{{}^C_{#1} \mathcal{D}_{#2}^{#3}}
\newcommand{\Dabm}{\mathcal{D}^{\alpha, \beta}_{\mu}}
\newcommand{\Diabm}{\mathbb{D}^{\alpha, \beta}_{\mu}}
\newcommand{\Dbamm}{\mathcal{D}^{\beta, \alpha}_{-\mu}}
\newcommand{\Dibamm}{\mathbb{D}^{\beta, \alpha}_{-\mu}}
\newcommand{\Da}{\mathbb{D}^{\alpha}}
\newcommand{\Db}{\mathbb{D}^{\beta}}
\newcommand{\Dae}{\mathbb{D}^{\alpha}_*}
\newcommand{\Dbe}{\mathbb{D}^{\beta}_*}
\newcommand{\ind}[5]{ {}^{#2}_{#4} {#1}_{#5}^{#3} }
\newcommand{\Drll}[0]{{}_{a} \mathcal{D}_{t}^{\alpha}}
\newcommand{\Drlr}[0]{{}_{t} \mathcal{D}_{b}^{\beta}}
\newcommand{\Dcl}[0]{{}^C_{a} \mathcal{D}_{t}^{\alpha}}
\newcommand{\Dcr}[0]{{}^C_{t} \mathcal{D}_{b}^{\beta}}
\newcommand{\Dcabm}{{}^C \mathcal{D}^{\alpha, \beta}_{\mu}}
\newcommand{\Dcbamm}{{}^C \mathcal{D}^{\beta, \alpha}_{-\mu}}
\newcommand{\Dpar}[2]{\dfrac{\partial #1}{\partial #2}}
\newcommand{\Eab}[0]{{}_a^\alpha E_b^\beta}
\newcommand{\Etab}[0]{{}_a^\alpha \tilde{E}_b^\beta}
\newcommand{\Reste}[7]{{}^{#1,#2}_{#3} \left[ #6, #7 \right]_{#4}^{#5}}
\newcommand{\Lc}[0]{\mathcal{L}}
\newcommand{\Varnab}[0]{Var^{\alpha, \beta}(a,b)}
\newcommand{\VarCab}[0]{{}^C Var(a,b)}
\newcommand{\VarCnab}[0]{{}^C Var^{\alpha, \beta}(a,b)}
\newcommand{\psc}[2]{\langle #1 , #2 \rangle}
\newcommand{\Ker}[0]{\mathcal{K}er \,}
\newcommand{\Ima}[0]{\mathcal{I}m \,}
\newcommand{\Aabm}[0]{\mathcal{A}^{\alpha, \beta}_{\mu}}
\newcommand{\Acabm}[0]{{}^C \mathcal{A}^{\alpha, \beta}_{\mu}}

\newcommand{\R}[0]{\mathbb{R}}
\newcommand{\mc}[1]{\mathcal{#1}}

%%%%%%%%%%%%%%%%%%%%%%%%%%%%%%%%%%%%%%%%%%%%%%%%%%%%%%

\newtheorem{theorem}{Theorem}
\newtheorem{definition}{Definition}
\newtheorem{lemma}{Lemma}

%%%%%%%%%%%%%%%%%%%%%%%%%%%%%%%%%%%%%%%%%%%%%%%%%%%%%%

\begin{abstract}
\noindent \emph{
Fractional equations appear in the description of the dynamics of various physical systems. For Lagrangian systems, the embedding theory developped by Cresson [``Fractional embedding of differential operators and Lagrangian systems'', J. Math. Phys. 48, 033504 (2007)] provides a univocal way to obtain such equations, stemming from a least action principle.
However, no matter how equations are obtained, the dimension of the fractional derivative differs from the classical one and may induce problems of temporal homogeneity in fractional objects. 
In this paper, we show that it is necessary to introduce an extrinsic constant of time. Then, we use it to construct two equivalent fractional embeddings which retain homogeneity. The notion of fractional constant is also discussed through this formalism. Finally, an illustration is given with natural Lagrangian systems, and the case of the harmonic oscillator is entirely treated.
}

\end{abstract}

%%%%%%%%%%%%%%%%%%%%%%%%%%%%%%%%%%%%%%%%%%%%%%%%%%%%%%%%%%%%%%%%%%%%%%%%%%%%%%%%%%%%%%%%%%%%%%%%%%%%%%%%%%%%%%
%%%%%%%%%%%%%%%%%%%%%%%%%%%%%%%%%%%%%%%%%%%%%%%%%%%%%%%%%%%%%%%%%%%%%%%%%%%%%%%%%%%%%%%%%%%%%%%%%%%%%%%%%%%%%%

\section{Introduction}

For about twenty years, the fractional calculus has known significant development, according to its successes in various domains, such as chaotic dynamics, viscoelasticity, acoustics, electricity or polymer chemistry \cite{Bag_Tor_83, Bag_tor_86, Carp_Main,Hilfer_App, Samko}.
In this context, fractional equations are used for modeling physical phenomena. No more considered as formal objects, but as physically significant, one may expect them to be homogeneous, a fundamental principle of the physical equations. In other words, the objects of a same equation must have the same dimension, expressible in terms of fundamental units, such as length $L$, mass $M$ or time $T$. For example, the dimension of a force $F$ is $[F] = M \, L^2 \, T^{-2}$.

However, in many fractional equations we can find in litterature, homogeneity is not respected. Let firm up details of our topic: from a strictly mathematical point of view, this notion of homogeneity is meaningless. The problem arises only if the equations considered are susceptible to model a physical system. But in that case, this question is fundamental. However, the problem is often hidden in modelisations because the constants are seen as parameters to fit.

How to obtain fractional equations in time ? The usual approach consists in subsistuting in the classical equation considered the derivative $d/dt$ (homogeneous to $T^{-1}$) by a new one, $d^\alpha / dt^\alpha$ (homogeneous to $T^{-\alpha}$), with $0 < \alpha < 1$. Clearly, this transformation does not conserve the temporal homogeneity of the equation. This remark would apply to fractional derivatives in space, but in this article, only the temporal ones will be considered.

The problem of the passage from the classical case to the fractional one has already been discussed in \cite{Cresson}. Shifting the derivative leads to fractional equations, but, from a physical point of view, is it the right procedure ? Do the fractional equations obtained remain physically relevant ? Answers are given in the case of Lagrangian systems. Extending the least action principle, Cresson \cite{Cresson} provides an univocal method, the fractional embedding, which leads to fractional equations. The respect of this other fundamental principle makes the equations susceptible to conserve a physical meaning.

This article is in keeping with this philosphy. In section \ref{extrinsic}, it is shown that an extrinsic characteristic time has to be introduced. After having presented the notion of embedding in section \ref{embeddings}, we show in section \ref{homogeneous} that this time constant leads to two equivalent methods of fractional embedding, which additionally conserve homogeneity. The notion of fractional constant is also adressed. Finally, an illustration is given in section \ref{natural} with the case of natural Lagrangian systems. A complete resolution is given for the harmonic oscillator.

%%%%%%%%%%%%%%%%%%%%%%%%%%%%%%%%%%%%%%%%%%%%%%%%%%%%%%%%%%%%%%%%%%%%%%%%%%%%%%%%%%%%%%%%%%%%%%%%%%%%%%%%%%%%%%
%%%%%%%%%%%%%%%%%%%%%%%%%%%%%%%%%%%%%%%%%%%%%%%%%%%%%%%%%%%%%%%%%%%%%%%%%%%%%%%%%%%%%%%%%%%%%%%%%%%%%%%%%%%%%%

\section{Extrinsic characteristic time} \label{extrinsic}

In this paper, the notation $d^\alpha / dt^\alpha$ will concern an unspecified fractional derivative, except in section \ref{sec_osc}. Moreover, the difficulties inherent to fractional derivatives (for example, $d^\alpha / dt^\alpha \circ d^\alpha / dt^\alpha \neq d^{2\alpha} / dt^{2\alpha}$ in general) won't be addressed here.

As it has already been said, the fractional aspect of equations occurs with derivatives generalized to noninteger orders. If this sustitution is done in one term of an equation, the other ones have also to be modified so as to preserve homogeneity. 
As a first example, we consider the damped oscillator equation,
\begin{equation} \label{oscillateur}
\dfrac{d^2}{dt^2} x(t) + \lambda \dfrac{d}{dt} x(t) + \omega^2 x(t)= 0.
\end{equation} 

The constants $\lambda$ and $\omega$ are homogeneous to $T^{-1}$.
Replacing $d / dt$ by $d^\alpha / dt^\alpha$ gives
\begin{equation}
\dfrac{d^{2\alpha}}{dt^{2\alpha}} x(t) + \lambda \dfrac{d^\alpha}{dt^\alpha} x(t) + \omega^2 x(t) = 0. \nonumber
\end{equation} 

This equation is non-homogeneous: the three terms have different temporal dimensions ($T^{-\alpha}$, $T^{-(1 + \alpha)}$ and $T^{-2}$).
A homogeneous version could be
\begin{equation} \label{oscill_1}
\dfrac{d^{2\alpha}}{dt^{2\alpha}} x(t) + \lambda^\alpha \dfrac{d^\alpha}{dt^\alpha} x(t) + \omega^{2 \alpha} x(t) = 0.
\end{equation} 

Things are getting trickier with the diffusion equation,
\begin{equation} \label{diffusion_clas}
\dfrac{\partial}{\partial t} u(x,t) - D \dfrac{\partial^2}{\partial x^2} u(x,t) = 0, \text{ with } [D] = L^2 \, T^{-1}.
\end{equation} 

If the operator $\partial / \partial t$ is directly remplaced by $\partial^\alpha / \partial t^\alpha$, the only way to preserve the temporal homogeneity is to shift $D$ into $D^\alpha$. We obtain
\begin{equation}
\dfrac{\partial^\alpha}{\partial t^\alpha} u(x,t) - D^\alpha \dfrac{\partial^2}{\partial x^2} u(x,t) = 0. \nonumber
\end{equation} 

Unfortunately, this equation becomes now nonhomogeneous in space: $\partial^\alpha / \partial t^\alpha$ has no spatial dimension, whereas the spatial dimension of $D^\alpha \, \partial^2 / \partial x^2$ is $L^{2(1-\alpha)}$.
The only way to preserve the spatial homogeneity is to change $\partial / \partial x$ by $\partial^\alpha / \partial x^\alpha$, which leads to
\begin{equation} \label{diff_alpha}
\dfrac{\partial^\alpha}{\partial t^\alpha} u(x,t) - D^\alpha \dfrac{\partial^{2\alpha}}{\partial x^{2\alpha}} u(x,t) = 0.
\end{equation} 

Regarding the numerous studies dealing with that equation (for example, \cite{Meer_Schef, Zasl_HCFD}), one of the most used in fractional calculus, this case seems too restrictive. According to \cite[chap. 16]{Zasl_HCFD}, anomalous transport cannot be described by \eqref{diff_alpha} (because the transport exponent remains classical).  

As a last example, we consider the free fall equation,
\begin{equation} \label{chute_libre}
\dfrac{d^2}{dt^2} x(t) + g = 0.
\end{equation} 

The dimension of the gravity acceleration $g$ is $[g] = L \, T^{-2}$.

If $d^2 /dt^2 $ is changed directly into $d^{2\alpha} / dt^{2\alpha}$, it is impossible to obtain an fractional and homogeneous equation.

A first conclusion could be drawn: a necessary condition for an equation to be physically relevant in the fractional case is the possibility of obtaining a homogeneous formulation. In this case, the damped oscillator and the diffusion fractional equations fill this condition, unlike the free fall equation. 

However, the constants which appear the equations may be built from other constants. For example, the gravity acceleration $g$ is in fact defined by $g = \mathcal{G} \, M_T \, R_T^{-2}$, where $\mathcal{G}$ is the gravity constant, $M_T$ et $R_T$ the mass and the radius of the Earth. So, equation \eqref{chute_libre} can provide a fractional and homogeneous equation,
\begin{equation}
\dfrac{d^{2\alpha}}{dt^{2\alpha}} x(t) + g^\alpha R_T^{1-\alpha} = 0. \nonumber
\end{equation} 

Therefore, the initial equation itself is not suffisant to conclude if the fractional equation derived is physically relevant or not. This first conclusion is consequently invalid.

Moreover, if no restrictions are added, there may be several ways to obtain fractional equations, which is unsatisfactory. Indeed, \eqref{oscillateur} can notably turn into
\begin{equation}
\dfrac{d^{2\alpha}}{dt^{2\alpha}} x(t) + \lambda^\alpha \dfrac{d^\alpha}{dt^\alpha} x(t) + \omega^2 \lambda^{2\alpha - 2} x(t) = 0, \nonumber
\end{equation} 
or into
\begin{equation} 
\dfrac{d^{2\alpha}}{dt^{2\alpha}} x(t) + \lambda^{2\alpha-\beta} \dfrac{d^\beta}{dt^\beta} x(t) + \omega^{2 \alpha} x(t) = 0, \quad 0 < \beta < 1. \nonumber
\end{equation} 
Both of these equations differ from \eqref{oscill_1}.

Facing this problem of underdetermination, equations have to be constrained so as to provide a unique fractional and homogeneous equation. As it was mentioned above, the fractional embedding plays this part in the case of Lagrangian systems. It implies, among others, that there is a single fractional exponent in fractional equations describing such systems. Furthermore, it does not modify the constants of the initial equation. Once the Lagrangian is given, the structure is totally determined and the fractional aspect appears only through the switch of $d / dt$ into $d^\alpha / dt^\alpha$. Consequently, a suitable fractional operator should not modify the homogeneity of the initial equation. Two possibilities are conceivable. The first one is the temporal nondimensionalization of the system: the classical and fractional temporal operators become dimensionless, and the homogeneity of the equation is preserved. The second method consists in choosing a fractional operator which dimension is the same as the classical one, i.e. $T^{-1}$. 
Both of them require the introduction of a time constant $\tau$. The examples above show that $\tau$ may not be determined directly by the equation (see \eqref{chute_libre}), or that several constants could suit for $\tau$ (see \eqref{oscillateur}). Consequently, the time constant $\tau$ has to be postulated, and defined \emph{a priori}, i.e., independently of the classical equation. The introduction of such a constant enables to solve the problem of homogeneity, while remaining compatible with the fractional embedding.

%%%%%%%%%%%%%%%%%%%%%%%%%%%%%%%%%%%%%%%%%%%%%%%%%%%%%%%%%%%%%%%%%%%%%%%%%%%%%%%%%%%%%%%%%%%%%%%%%%%%%%%%%%%%%%
%%%%%%%%%%%%%%%%%%%%%%%%%%%%%%%%%%%%%%%%%%%%%%%%%%%%%%%%%%%%%%%%%%%%%%%%%%%%%%%%%%%%%%%%%%%%%%%%%%%%%%%%%%%%%%

\section{General embeddings} \label{embeddings}

We adapt here the embedding notion presented in \cite{Cresson}.

%%%%%%%%%%%%%%%%%%%%%%%%%%%%%%%%%%%%%%%%%%%%%%%%%%%%%%%%%%%%%%%%%%%%%%%%%%%%%%%%%%%%%%%%%%%%%%%%%%%%%%%%%%%%%%

\subsection{Embedding of differential operators}

Let $\textbf{f} = (f_1, \ldots, f_p)$ and $\textbf{g} = (g_1, \ldots, g_p)$ be two $p$-uplets of $C^\infty$ functions $\R^{k+2} \longrightarrow \R$. Let $a,b \in \R$ with $a < b$.
We denote by $\mc{O} (\textbf{f},\textbf{g})$ the differential operator defined by
\begin{equation} \label{Ofg}
\begin{array}{cccl}
\mc{O} (\textbf{f},\textbf{g}) \, : & C^\infty([a,b]) & \longrightarrow & \quad C^\infty([a,b]) \\
             &       x		& \longmapsto    &  \displaystyle{\sum_{i=0}^p} \left( f_i \cdot \left( \dfrac{d}{dt} \right)^i \circ g_i \right) \left( x(\bullet), \ldots,  \left( \dfrac{d}{dt} \right)^k x(\bullet), \bullet \right),
\end{array}
\end{equation} 

where, for any function $u \, : \, \R^{k+2} \longrightarrow \R$, any functions $x_0, \ldots, x_k \; : \, [a,b] \longrightarrow \R$,
\begin{equation}
\begin{array}{cccl}
u \left( x_0(\bullet), \ldots, x_k(\bullet), \bullet \right) \, : & [a,b] & \longrightarrow & \; \R \\
            						 &       t	& \longmapsto    & u(x_0(t),\ldots, x_k(t),t), \nonumber
\end{array}
\end{equation} 

and, for any functions $f$ and $g$, $(f \cdot g) (t) = f(t) \cdot g(t)$.

Now we extend this class of operators with an operator which generalizes $d / dt$.

\begin{definition}
Let $\mc{D} \, : \, C^\infty([a,b]) \longrightarrow C^\infty([a,b])$ be a differential operator. With the previous notations, the $\mc{D}$-embedding of $\mc{O} (\textbf{f},\textbf{g})$, denoted $\mc{E}(\mc{O} (\textbf{f},\textbf{g}),\mc{D})$, is defined by
\begin{equation}
\begin{array}{cccl}
\mc{E}(\mc{O} (\textbf{f},\textbf{g}),\mc{D}) \, : & C^\infty([a,b]) & \longrightarrow & \quad C^\infty([a,b]) \\
             &       x		& \longmapsto    &  \displaystyle{\sum_{i=0}^p} \left( f_i \cdot \mc{D}^i \circ g_i \right) \left(  x(\bullet), \ldots,  \mc{D}^k x(\bullet), \bullet \right). \nonumber
\end{array}
\end{equation}   

\end{definition}

We define the ordinary differential equation associated with $\mc{O} (\textbf{f},\textbf{g})$ by
\begin{equation} \label{ODE}
\mc{O} (\textbf{f},\textbf{g})(x) = 0, \quad x \in C^\infty([a,b]).
\end{equation} 

\begin{definition}
Let $\mc{D} \, : \, C^\infty([a,b]) \longrightarrow C^\infty([a,b])$ be a differential operator. With the previous notations, the $\mc{D}$-embedding of equation \eqref{ODE} is defined by
\begin{equation}
\mc{E}(\mc{O} (\textbf{f},\textbf{g}),\mc{D}) (x) = 0, \quad x \in C^\infty([a,b]). \nonumber
\end{equation} 
\end{definition}

%%%%%%%%%%%%%%%%%%%%%%%%%%%%%%%%%%%%%%%%%%%%%%%%%%%%%%%%%%%%%%%%%%%%%%%%%%%%%%%%%%%%%%%%%%%%%%%%%%%%%%%%%%%%%%

\subsection{Embedding of Lagrangian systems}

We consider a Lagrangian system, with Lagrangian $L(x,v,t)$, $t \in [a,b]$, and $\mc{D} \, : \, C^\infty([a,b]) \longrightarrow C^\infty([a,b])$ a differential operator.

The Lagrangian $L$ can naturally lead to a differential operator of the form \eqref{Ofg}
\begin{equation}
\begin{array}{cccl}
\mc{O} (1,L) \, : & C^\infty([a,b]) & \longrightarrow & \quad C^\infty([a,b]) \\
             &       x		& \longmapsto    &  L \left( x(\bullet), \dfrac{d}{dt} x(\bullet), \bullet \right). \nonumber
\end{array}
\end{equation}

Now we identify $L$ and $\mc{O}(1,L)$.

The $\mc{D}$-embedding of $L$, $\mc{E}(L, \mc{D})$, will be denoted $\hat{L}(\mc{D})$ or shortened into $\hat{L}$,
\begin{equation}
\begin{array}{cccl}
\hat{L}(\mc{D}) \, : & C^\infty([a,b]) & \longrightarrow & \quad C^\infty([a,b]) \\
             &       x		& \longmapsto    &  L( x(\bullet), \mc{D} \, x(\bullet), \bullet). \nonumber
\end{array}
\end{equation}

In Lagrangian mechanics, the action and its minima play a central role. Let us define an action suitable for the embedding formalism.

\begin{definition}
Let $g \, : \, C^\infty([a,b]) \longrightarrow C^\infty([a,b])$ be a mapping. The action of $g$, denoted $\mc{A}(g)$, is defined by
\begin{equation}
\begin{array}{cccl}
\mc{A}(g) \, : & C^\infty([a,b]) & \longrightarrow & \quad \R \\
             &       x		& \longmapsto    &  \displaystyle{\int_a^b} g(x)(t) \, dt. \nonumber
\end{array}
\end{equation} 
\end{definition}

For example, with the identification $L \equiv \mc{O}(1,L)$, the action of $L$ is given by
\begin{equation}
\mc{A}(L)(x) = \int_a^b L \left( x(t), \dfrac{d}{dt} x(t), t \right) \, dt. \nonumber
\end{equation} 

Concerning the $\mc{D}$-embedding of $L$, the associated action is
\begin{equation}
\mc{A}(\hat{L}(\mc{D}))(x) = \int_a^b L \left( x(t), \mc{D} x(t), t \right) \, dt. \nonumber
\end{equation} 

The extremum of the associated action to a Lagrangian system provides the equation of motion of the system trough the following theorem.

\begin{theorem}
The action $\mc{A}(L)$ is extremal in $x$ if and only if $x$ satisfies the Euler-Lagrange equation, given by
\begin{equation} \label{EL_clas}
\forall t \in [a,b], \quad \partial_1 L \left( x(t), \dfrac{d}{dt} x(t), t \right) - \dfrac{d}{dt}  \partial_2 L \left( x(t), \dfrac{d}{dt} x(t), t \right) = 0.
\end{equation}
\end{theorem}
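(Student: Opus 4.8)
The plan is to prove both implications through the single classical computation of the first variation. First I would specify the class of admissible perturbations: taking the endpoints of $x$ as fixed, I consider variations $x + \epsilon h$ where $h \in C^\infty([a,b])$ satisfies $h(a) = h(b) = 0$, and I study the real-valued map $\epsilon \mapsto \mc{A}(L)(x + \epsilon h)$. By definition, $\mc{A}(L)$ is extremal in $x$ exactly when this map admits a critical point at $\epsilon = 0$ for every such $h$, that is, when its derivative at $\epsilon = 0$ vanishes for all admissible $h$.

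Next I would compute that derivative. Differentiation under the integral sign is licit since $L$ is $C^\infty$ and $[a,b]$ is compact, and the chain rule, together with $\frac{d}{dt}(x + \epsilon h) = \frac{d}{dt} x + \epsilon \frac{d}{dt} h$, yields
\begin{equation}
\dfrac{d}{d\epsilon} \mc{A}(L)(x + \epsilon h) \Big|_{\epsilon = 0} = \int_a^b \left[ \partial_1 L \cdot h(t) + \partial_2 L \cdot \dfrac{d}{dt} h(t) \right] dt, \nonumber
\end{equation}
where throughout the arguments of $\partial_1 L$ and $\partial_2 L$ are $\left( x(t), \frac{d}{dt} x(t), t \right)$.

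I would then integrate the second term by parts. Because $h(a) = h(b) = 0$, the boundary contribution $\left[ \partial_2 L \cdot h \right]_a^b$ vanishes, leaving
\begin{equation}
\dfrac{d}{d\epsilon} \mc{A}(L)(x + \epsilon h) \Big|_{\epsilon = 0} = \int_a^b \left[ \partial_1 L - \dfrac{d}{dt} \partial_2 L \right] h(t) \, dt. \nonumber
\end{equation}
One implication is now immediate: if $x$ satisfies \eqref{EL_clas}, the bracket is identically zero, hence so is the integral for every $h$, and $\mc{A}(L)$ is extremal in $x$.

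For the converse I expect the genuine obstacle to be the passage from an integral identity to a pointwise one, which is the fundamental lemma of the calculus of variations: if a continuous function $\phi$ on $[a,b]$ satisfies $\int_a^b \phi(t) h(t) \, dt = 0$ for all $h \in C^\infty([a,b])$ with $h(a) = h(b) = 0$, then $\phi \equiv 0$. The delicate point is the proof of this lemma itself, argued by contradiction: if $\phi(t_0) \neq 0$ at some interior point, continuity provides a subinterval on which $\phi$ keeps a fixed sign, and one constructs a nonnegative $C^\infty$ bump function $h$ supported on that subinterval, forcing the integral to be nonzero. Applying the lemma to $\phi = \partial_1 L - \frac{d}{dt} \partial_2 L$, which is continuous by the smoothness of $L$ and $x$, delivers \eqref{EL_clas} and closes the equivalence.
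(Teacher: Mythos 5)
The paper states this theorem without proof, treating it as the classical Euler--Lagrange result from the calculus of variations, so there is no in-paper argument to compare against; your proof is the standard one and is correct. The first-variation computation, the integration by parts with vanishing boundary terms, and the appeal to the fundamental lemma (with the bump-function argument for the converse) are exactly the expected ingredients, and you correctly identify the fundamental lemma as the only nontrivial step.
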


Equation \eqref{EL_clas} is denoted $EL(L)$.
For the embedded Lagrangian, a connected result states the following.

\begin{theorem}
Under some appropriate variations of the action (see \cite{Inizan_CBHF}), $\mc{A}(\hat{L}(\mc{D}))$ is extremal in $x$ if $x$ satisfies the causal Euler-Lagrange equation, given by
\begin{equation} \label{EL_gen}
\forall t \in [a,b], \quad \partial_1 L(x(t), \mc{D} x(t), t) - \mc{D} \, \partial_2 L( x(t), \mc{D} x(t), t) = 0.
\end{equation}
\end{theorem}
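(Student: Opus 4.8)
The plan is to mirror the classical derivation of the Euler--Lagrange equation (the previous theorem), replacing $d/dt$ by $\mc{D}$ throughout and taking care that the integration-by-parts step remains legitimate for the general operator. Since the statement asserts only the implication ``if $x$ solves \eqref{EL_gen}, then $\mc{A}(\hat{L}(\mc{D}))$ is extremal,'' it suffices to show that the first variation of the action vanishes along every admissible perturbation whenever \eqref{EL_gen} holds; I will not need the fundamental lemma of the calculus of variations, only the explicit computation of the first variation.

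First I would fix $x \in C^\infty([a,b])$ and an admissible variation $h$ (a smooth function in the class of ``appropriate variations'' alluded to in the statement, described below), and set $\phi(\epsilon) = \mc{A}(\hat{L}(\mc{D}))(x + \epsilon h) = \int_a^b L\big(x(t) + \epsilon h(t),\, \mc{D}x(t) + \epsilon\,\mc{D}h(t),\, t\big)\, dt$, using the linearity of $\mc{D}$ to write $\mc{D}(x+\epsilon h) = \mc{D}x + \epsilon\,\mc{D}h$. Differentiating under the integral sign and applying the chain rule at $\epsilon = 0$ gives
\begin{equation}
\phi'(0) = \int_a^b \Big[ \partial_1 L\big(x(t), \mc{D}x(t), t\big)\, h(t) + \partial_2 L\big(x(t), \mc{D}x(t), t\big)\, \mc{D}h(t) \Big]\, dt. \nonumber
\end{equation}
The decisive step is then to transfer $\mc{D}$ off $h$ in the second term by the integration-by-parts (adjoint) formula appropriate to $\mc{D}$. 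The role of the ``appropriate variations'' is precisely to guarantee that the boundary contributions produced by this transfer vanish, so that
\begin{equation}
\int_a^b \partial_2 L\big(x(t), \mc{D}x(t), t\big)\, \mc{D}h(t)\, dt = \int_a^b \mc{D}\,\partial_2 L\big(x(t), \mc{D}x(t), t\big)\, h(t)\, dt. \nonumber
\end{equation}
Substituting this into the expression for $\phi'(0)$ yields $\phi'(0) = \int_a^b \big[ \partial_1 L - \mc{D}\,\partial_2 L \big]\big(x(t), \mc{D}x(t), t\big)\, h(t)\, dt$, so that if $x$ satisfies \eqref{EL_gen} the integrand is identically zero, $\phi'(0) = 0$ for every admissible $h$, and the action is extremal at $x$.

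The main obstacle is exactly this integration-by-parts step. Unlike $d/dt$, a general (e.g.\ fractional) operator $\mc{D}$ does not satisfy the naive identity $\int \partial_2 L\,\mc{D}h = \int \mc{D}\,\partial_2 L\cdot h$: its adjoint is typically a \emph{different} operator (for a left fractional derivative, a right-sided one), and nontrivial boundary terms appear. The content concealed in ``under some appropriate variations'' is the specification of a class of perturbations $h$ for which the adjoint formula closes on $\mc{D}$ itself and the boundary data vanish; this is where I would invoke the construction of \cite{Inizan_CBHF} rather than reprove it, since it is precisely the delicate point that distinguishes the causal equation \eqref{EL_gen} from a symmetric or two-sided formulation.
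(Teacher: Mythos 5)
The paper states this theorem without any proof, importing it wholesale from \cite{Inizan_CBHF}, so there is no in-paper argument to compare against. Your sketch is the standard first-variation computation and correctly isolates the one delicate point --- that the adjoint of $\mc{D}$ is in general a \emph{different} (anti-causal) operator, so the admissible variations must be restricted before the transferred term can be written as $\mc{D}\,\partial_2 L$ rather than as the adjoint applied to $\partial_2 L$ --- and, exactly like the paper, you delegate that point to the cited reference. This matches the paper's own level of detail; just be aware that the substance of the theorem lives entirely in that deferred step, so your argument is a correct framing rather than a self-contained proof.
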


Equation \eqref{EL_gen} is denoted $EL(\hat{L}(\mc{D}))$. 
This leads to the following result.

\begin{theorem}
In the causal case (i.e. with the appropriate variations of the action), we have the commutative scheme,
\begin{equation}
EL(\mc{E}(L, \mc{D})) \equiv \mc{E}(EL(L),\mc{D}). \nonumber
\end{equation} 
This embedding procedure is qualified as \emph{coherent}.
\end{theorem}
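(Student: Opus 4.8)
The plan is to prove the identity by direct computation, verifying that both routes around the scheme deposit the same differential operator. The assertion is that forming the Euler--Lagrange equation and then embedding coincides with embedding the Lagrangian and then forming its Euler--Lagrange equation. Since $\mc{E}(L,\mc{D}) = \hat{L}(\mc{D})$, the claim $EL(\mc{E}(L,\mc{D})) \equiv \mc{E}(EL(L),\mc{D})$ is equivalent to $EL(\hat{L}(\mc{D})) \equiv \mc{E}(EL(L),\mc{D})$, so I would establish this last equality.

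First I would recast the classical Euler--Lagrange equation \eqref{EL_clas} as a differential operator of the canonical form \eqref{Ofg}. Reading off its two terms, $EL(L)$ is precisely $\mc{O}(\textbf{f},\textbf{g})(x)=0$ with $p=k=1$, $\textbf{f}=(1,-1)$ and $\textbf{g}=(\partial_1 L, \partial_2 L)$: the index-$0$ term reproduces $\partial_1 L(x(\bullet), \frac{d}{dt}x(\bullet), \bullet)$, while the index-$1$ term, carrying the coefficient $f_1=-1$, reproduces $-\frac{d}{dt}\,\partial_2 L(x(\bullet), \frac{d}{dt}x(\bullet), \bullet)$. Here $\partial_1 L$ and $\partial_2 L$ are genuine maps $\R^{k+2}\to\R$, and the constant coefficients $1,-1$ are admissible as (constant) functions on $\R^{k+2}$.

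Next I would apply the definition of the $\mc{D}$-embedding of an operator to this particular $\mc{O}(\textbf{f},\textbf{g})$. By that definition every occurrence of $\frac{d}{dt}$, both the outer power $(\frac{d}{dt})^i$ and the inner first derivative filling the velocity slot of the state, is replaced by $\mc{D}$. With $\textbf{f}=(1,-1)$ and $\textbf{g}=(\partial_1 L, \partial_2 L)$ this produces
\[
\mc{E}(EL(L),\mc{D})(x) = \partial_1 L(x(\bullet), \mc{D}x(\bullet), \bullet) - \mc{D}\,\partial_2 L(x(\bullet), \mc{D}x(\bullet), \bullet).
\]
Setting this equal to zero is, term by term, exactly the causal Euler--Lagrange equation \eqref{EL_gen}, that is $EL(\hat{L}(\mc{D}))$. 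This identification completes the chain $EL(\mc{E}(L,\mc{D})) \equiv \mc{E}(EL(L),\mc{D})$.

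The step I expect to carry the real content, as opposed to bookkeeping, is the appeal to the causal hypothesis. Equation \eqref{EL_gen} is \emph{not} a definition: it is the output of the variational calculation for the action $\mc{A}(\hat{L}(\mc{D}))$, and for a general operator $\mc{D}$ the integration by parts inside that calculation would deposit the adjoint operator in the second term rather than $\mc{D}$ itself. The restriction to the appropriate (causal) variations is precisely what forces the operator in the derivative slot of the embedded Euler--Lagrange equation to be $\mc{D}$, so that the variationally obtained left-hand side matches the purely syntactic substitution performed on the right-hand side. Hence the obstacle is not the algebra, which is essentially definitional, but the verification that the variational route and the substitution route place the \emph{same} operator $\mc{D}$ in the \emph{same} position; this coincidence is guaranteed exactly in the causal case, which is why coherence of the embedding is stated under that hypothesis.
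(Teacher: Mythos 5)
Your proof is correct and follows essentially the same route as the paper: rewrite $EL(L)$ as an operator of the form \eqref{Ofg} and observe that its $\mc{D}$-embedding is literally \eqref{EL_gen}. The only (immaterial) difference is that you place the minus sign in $\textbf{f}=(1,-1)$ with $\textbf{g}=(\partial_1 L,\partial_2 L)$ whereas the paper uses $\textbf{f}=(1,1)$ and $\textbf{g}=(\partial_1 L,-\partial_2 L)$; your added remark that the causal hypothesis is what makes the variationally derived equation \eqref{EL_gen} match the syntactic substitution is a correct reading of where the real content lies, which the paper leaves implicit in its preceding theorem.
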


\begin{proof}

Equation \eqref{EL_clas} can be written in the form \eqref{ODE}, with $\textbf{f}= (1,1)$ and $\textbf{g}=(\partial_1 L, - \partial_2 L)$. Its $\mc{D}$-embedding is exactly \eqref{EL_gen}.

\end{proof}

%%%%%%%%%%%%%%%%%%%%%%%%%%%%%%%%%%%%%%%%%%%%%%%%%%%%%%%%%%%%%%%%%%%%%%%%%%%%%%%%%%%%%%%%%%%%%%%%%%%%%%%%%%%%%%
%%%%%%%%%%%%%%%%%%%%%%%%%%%%%%%%%%%%%%%%%%%%%%%%%%%%%%%%%%%%%%%%%%%%%%%%%%%%%%%%%%%%%%%%%%%%%%%%%%%%%%%%%%%%%%

\section{Homogeneous fractional embeddings} \label{homogeneous}

Let us now explain the two methods presented in the Introduction for the Lagrangian sytems. We show that they are equivalent. Then we present what could be a third method, also equivalent, which is farther from the philosophy of the embedding theory, but which makes fractional constants to appear.

Let $L(x,v,t)$ be a Lagrangian, with $t \in [a,b]$, and $\tau$ the time constant introduced in the previous part.
The temporal dimensions of the variables $x$, $v$ and $t$ are respectively $T^0$, $T^{-1}$ and $T$.

%%%%%%%%%%%%%%%%%%%%%%%%%%%%%%%%%%%%%%%%%%%%%%%%%%%%%%%%%%%%%%%%%%%%%%%%%%%%%%%%%%%%%%%%%%%%%%%%%%%%%%%%%%%%%%

\subsection{Temporal nondimensionalization}

Nondimensionalization is a widespread method used in physics, particularly in fluid mechanics, to simplify equations and to exhib relevant parameters. Concerning fractional dynamics, the constant $\tau$ introduced above seems physically relevant. In the following part, it is used to nondimensionalize equations according to the temporal dimension. 

\begin{definition}
We note $\tilde{a}=a / \tau$ and $\tilde{b}= b / \tau$.
The temporal nondimensionalized Lagrangian, denoted $L_n$, evolving on the interval $[\tilde{a}, \tilde{b}]$, is defined by
\begin{equation}
\forall u \in [\tilde{a}, \tilde{b}], \quad L_n (x,y,u) = L( x, \dfrac{y}{\tau}, \tau u ). \nonumber
\end{equation} 
\end{definition}

None of the variables of $L_n$ has temporal dimension, which justify the denomination.
We choose here $\mc{D} = d^\alpha / dt^\alpha$.
As the new temporal evolution variable $u$ has no dimension, the classical derivative according to this new time, $d / du$, is also dimensionless, such as $d^\alpha / du^\alpha$.
Therefore, the substitution $d / du \rightarrow d^\alpha / du^\alpha$ does not break homogeneity anymore.

For example, if $d^\alpha / dt^\alpha$ is taken as the Caputo derivative, defined by
\begin{equation}
\Drl{s_0}{s}{\alpha} f(s) = \dfrac{1}{\Gamma(1-\alpha)} \int_{s_0}^s (s-v)^{-\alpha} f'(v) \, dv, \nonumber
\end{equation} 

we see that if $t_0$ and $t$ have the dimension of a time, $d^\alpha / dt^\alpha = \Drl{t_0}{t}{\alpha}$ is homogeneous to $T^{-\alpha}$, whereas if $u_0$ and $u$ are dimensionless, $d^\alpha / du^\alpha = \Drl{u_0}{u}{\alpha}$ is dimensionless.

The $d^\alpha / du^\alpha$-embedding of $L_n$, $\hat{L}_n \left( d^\alpha / du^\alpha \right)$, is
\begin{equation}
\begin{array}{cccl}
\hat{L}_n \left( \dfrac{d^\alpha}{du^\alpha} \right) \, : & C^\infty([\tilde{a},\tilde{b}]) & \longrightarrow & \quad C^\infty([\tilde{a},\tilde{b}]) \\
             &       x		& \longmapsto    &  L_n \left( x(\bullet), \dfrac{d^\alpha}{du^\alpha} x(\bullet), \bullet \right). \nonumber
\end{array}
\end{equation}   

For sake of lisibility, $\hat{L}_n \left( d^\alpha / du^\alpha \right)$ will be shortened into $\hat{L}_n$.
It verifies
\begin{equation}
\forall x \in C^\infty([\tilde{a},\tilde{b}]), \, \forall u \in [\tilde{a},\tilde{b}], \quad
\hat{L}_n (x)(u) = L \left( x(u), \tau^{-1} \dfrac{d^\alpha}{du^\alpha} x(u), \tau u \right). \nonumber
\end{equation}

The associated action to $\hat{L}_n$ is
\begin{equation}
\mc{A}(\hat{L}_n)(x) = \int_{\tilde{a}}^{\tilde{b}} L_n \left( x(u), \dfrac{d^\alpha}{du^\alpha} x(u),u \right) \, du \nonumber
\end{equation} 

and leads to the following result.

\begin{theorem}
The Euler-Lagrange equation associated with the $d^\alpha / du^\alpha$-embedding of the nondimensionalized Lagrangian $L_n$ is given, in the causal case, by
\begin{equation} \label{EL1}
\forall u \in [\tilde{a}, \tilde{b}], \quad  \partial_1 L_n \left( x(u), \dfrac{d^\alpha}{du^\alpha} x(u), u \right) - \dfrac{d^\alpha}{du^\alpha} \partial_2 L_n \left( x(u), \dfrac{d^\alpha}{du^\alpha} x(u), u \right)= 0, 
\end{equation}
with $x \in C^\infty([\tilde{a},\tilde{b}])$.
\end{theorem}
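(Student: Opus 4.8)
The plan is to obtain \eqref{EL1} as a direct specialization of the general causal Euler-Lagrange equation \eqref{EL_gen} established above, rather than as a fresh variational computation. First I would check that the two ingredients of the embedding formalism are in place: by its definition $L_n$ is a $C^\infty$ function of three real arguments $(x,y,u)$, hence a Lagrangian in exactly the sense required by the formalism, and $d^\alpha / du^\alpha$ is a differential operator $C^\infty([\tilde{a},\tilde{b}]) \longrightarrow C^\infty([\tilde{a},\tilde{b}])$, so it qualifies as an admissible $\mc{D}$ on the dimensionless interval $[\tilde{a},\tilde{b}]$.

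Next I would match the two action functionals. The action associated with $\hat{L}_n$ has already been written above as
\begin{equation}
\mc{A}(\hat{L}_n)(x) = \int_{\tilde{a}}^{\tilde{b}} L_n \left( x(u), \dfrac{d^\alpha}{du^\alpha} x(u), u \right) du, \nonumber
\end{equation}
which is literally $\mc{A}(\hat{L}(\mc{D}))(x)$ under the identifications $L \leftarrow L_n$, $\mc{D} \leftarrow d^\alpha / du^\alpha$, $[a,b] \leftarrow [\tilde{a},\tilde{b}]$, and the integration variable $t \leftarrow u$. With these identifications in force, I would simply invoke \eqref{EL_gen}: in the causal case $\mc{A}(\hat{L}(\mc{D}))$ is extremal in $x$ when $\partial_1 L(x(\bullet), \mc{D} x(\bullet), \bullet) - \mc{D} \, \partial_2 L(x(\bullet), \mc{D} x(\bullet), \bullet) = 0$. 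Substituting the identifications term by term turns this expression into \eqref{EL1}, which completes the argument.

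I do not expect a genuine obstacle, precisely because the statement is a specialization of a result already proved. The only points deserving a line of verification are, first, that $d^\alpha / du^\alpha$ really maps $C^\infty([\tilde{a},\tilde{b}])$ into itself and thus belongs to the class of operators for which \eqref{EL_gen} was established, and second, that the \emph{appropriate variations} invoked in \eqref{EL_gen} (detailed in \cite{Inizan_CBHF}) remain the correct ones in this setting. The latter is immediate because the passage $t = \tau u$ is an affine change of variable with the fixed constant $\tau$, so it leaves the variational structure untouched; in particular it does not alter which boundary terms vanish under the causal variations, and it is this invariance under nondimensionalization that makes the direct transfer of \eqref{EL_gen} legitimate here.
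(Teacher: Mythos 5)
Your proposal is correct and matches the paper's (implicit) reasoning: the paper states this theorem without a separate proof, treating it exactly as you do, namely as the instantiation of the general causal Euler--Lagrange equation \eqref{EL_gen} with $L \leftarrow L_n$, $\mc{D} \leftarrow d^\alpha/du^\alpha$, and $[a,b] \leftarrow [\tilde{a},\tilde{b}]$. Your added checks (that $d^\alpha/du^\alpha$ maps $C^\infty([\tilde{a},\tilde{b}])$ into itself and that the causal variations carry over under the affine rescaling $t = \tau u$) are sensible diligence but do not change the route.
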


This is the equation of motion of the Lagrangian system which Lagrangian is $L_n$, and which dynamics evolution is fractional, according to the operator $d^\alpha / du^\alpha$. The solution $x$ of \eqref{EL1} depends on the nondimensionalized evolution variable $u$.
As $d^\alpha / du^\alpha$ has no dimension, equation \eqref{EL1} is clearly homogeneous.

This method eludes the question of the dimension of the fractional operator, and, by modifying the Lagrangian variables (but not the Lagrangian itself), it leads to an homogeneous fractional embedding. However, it occults the real dynamics, based on a real (i.e., dimensionalized) time. The following method will lift this veil.

%%%%%%%%%%%%%%%%%%%%%%%%%%%%%%%%%%%%%%%%%%%%%%%%%%%%%%%%%%%%%%%%%%%%%%%%%%%%%%%%%%%%%%%%%%%%%%%%%%%%%%%%%%%%%%

\subsection{Homogeneous fractional derivative}

The time constant $\tau$ can be used to nondimensionalize the temporal operator, but it also enables to build a fractional operator which dimension remains $T^{-1}$: $\tau^{\alpha-1} d^\alpha / dt^\alpha$. Therefore, this operator preserves the homogeneity of the fractional embedding. Let us detail the procedure.

We consider the same initial Lagrangian $L(x,v,t)$, with $t \in [a,b]$. Contrary to the previous method, there is no need to define a new Lagrangian nor a new evolution variable.
Now the differential operator $\mc{D}$ becomes $\tau^{\alpha-1} d^\alpha / dt^\alpha$ and acts on functions which depend on the real time $t$.

The $\tau^{\alpha-1} d^\alpha / dt^\alpha$-embedding of $L$, $\hat{L} \left( \tau^{\alpha-1} d^\alpha / dt^\alpha \right)$ will be denoted $\hat{L}_h$.
\begin{equation}
\begin{array}{cccl}
\hat{L}_h \, : & C^\infty([a,b]) & \longrightarrow & C^\infty([a,b]) \\
             &       x		& \longmapsto    &  L \left( x(\bullet), \tau^{\alpha-1} \dfrac{d^\alpha}{dt^\alpha} x(\bullet), \bullet \right), \nonumber
\end{array}
\end{equation}

The associated action $\mc{A}(\hat{L}_h)$ verifies
\begin{equation}
\forall x \in C^\infty([a,b]), \quad \mc{A}(\hat{L}_h)(x) = \int_a^b L \left( x(t), \tau^{\alpha-1} \dfrac{d^\alpha}{dt^\alpha} x(t), t \right) \, dt. \nonumber
\end{equation}

The equation of motion is now given by
\begin{theorem}
The Euler-Lagrange equation associated with the $\left( \tau^{\alpha-1} d^\alpha / dt^\alpha \right)$-embedding of $L$ is given, in the causal case, by
\begin{equation} \label{EL2}
\forall t \in [a, b], \quad \partial_1 L \left( x(t), \tau^{\alpha-1} \dfrac{d^\alpha}{dt^\alpha} x(t),t \right) - \tau^{\alpha-1} \dfrac{d^\alpha}{dt^\alpha} \partial_2 L \left( x(t), \tau^{\alpha-1} \dfrac{d^\alpha}{dt^\alpha} x(t),t \right)  = 0.
\end{equation} 
\end{theorem}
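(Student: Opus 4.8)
My approach is to derive \eqref{EL2} as an immediate specialization of the general causal Euler-Lagrange equation \eqref{EL_gen}, which was established for an \emph{arbitrary} differential operator $\mc{D} : C^\infty([a,b]) \to C^\infty([a,b])$. Because all of the variational content --- in particular the fractional integration-by-parts that produces the outer operator $\mc{D}$ acting on $\partial_2 L$ in \eqref{EL_gen} --- is already absorbed into that general theorem, no new computation of the first variation of the action is needed here.

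First I would check that $\tau^{\alpha-1} d^\alpha/dt^\alpha$ is an admissible operator for the $\mc{D}$-embedding. Since $d^\alpha/dt^\alpha$ is assumed throughout to be a differential operator $C^\infty([a,b]) \to C^\infty([a,b])$ and $\tau^{\alpha-1}$ is a fixed nonzero scalar (with $\tau > 0$), the scalar multiple $\tau^{\alpha-1} d^\alpha/dt^\alpha$ again maps $C^\infty([a,b])$ into itself and may therefore play the role of $\mc{D}$. By the definition of $\hat{L}_h = \hat{L}(\tau^{\alpha-1} d^\alpha/dt^\alpha) = \mc{E}(L, \tau^{\alpha-1} d^\alpha/dt^\alpha)$, the embedded Lagrangian and its action $\mc{A}(\hat{L}_h)$ are exactly the objects to which the theorem giving \eqref{EL_gen} applies. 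Substituting $\mc{D} = \tau^{\alpha-1} d^\alpha/dt^\alpha$ into \eqref{EL_gen} then replaces each occurrence of $\mc{D}x(t)$ by $\tau^{\alpha-1}(d^\alpha/dt^\alpha)x(t)$ and the outer $\mc{D}\,\partial_2 L$ by $\tau^{\alpha-1}(d^\alpha/dt^\alpha)\,\partial_2 L$, yielding precisely \eqref{EL2}.

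I do not expect a genuine obstacle, since the statement is a specialization rather than a new result; the only point requiring care is the bookkeeping of the constant $\tau^{\alpha-1}$, which must appear consistently both inside $L$ (through the argument $\mc{D}x$) and as the prefactor of the second fractional derivative. Treating $\tau^{\alpha-1} d^\alpha/dt^\alpha$ as a single operator $\mc{D}$ from the outset makes this automatic and guarantees that both occurrences carry the same constant, exactly as written in \eqref{EL2}. The same reasoning, applied instead with the dimensionless operator $d^\alpha/du^\alpha$, underlies \eqref{EL1}, which is why the two homogeneous embeddings can subsequently be compared on an equal footing.
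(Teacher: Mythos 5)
Your proof is correct and matches the paper's (implicit) argument: the paper states this theorem without a separate proof precisely because it is the specialization of the general causal Euler--Lagrange equation \eqref{EL_gen} to $\mc{D} = \tau^{\alpha-1} d^\alpha/dt^\alpha$, exactly as you argue. Your additional care about the prefactor appearing consistently in both occurrences is sound and consistent with the paper.
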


In this method, the fractional Euler-Lagrange equation and its solutions $x$ depend of the real time $t$, and all of the fractional objects (variables, operator, Lagrangian, equation) hold the same dimension as their classical predecessors.

%%%%%%%%%%%%%%%%%%%%%%%%%%%%%%%%%%%%%%%%%%%%%%%%%%%%%%%%%%%%%%%%%%%%%%%%%%%%%%%%%%%%%%%%%%%%%%%%%%%%%%%%%%%%%%

\subsection{Equivalence between the two methods}

Both of these two methods constitute homogeneous fractional embeddings. Fortunately, if a quite natural condition is respected, they turn out to be equivalent.

We suppose that the fractional derivative $d^\alpha / dt^\alpha$ verifies the following condition:
\begin{equation} \label{D}
\forall x \in C^\infty([a,b]), \quad \tau^\alpha \dfrac{d^\alpha}{dv^\alpha} \left. x(v) \right|_{v=t} = \dfrac{d^\alpha}{du^\alpha} \left. \tilde{x}(u) \right|_{u = t/\tau}, \quad \text{where } \tilde{x} \, : \, u \mapsto x(u\tau).
\end{equation} 

This property is notably verified by the usual fractional derivatives. For example, for the Caputo derivative, we have
\begin{eqnarray*}
\Drl{\tilde{a}}{u}{\alpha} \tilde{x}(u) & = & \dfrac{1}{\Gamma(1-\alpha)} \int_{\tilde{a}}^u (u-v)^{-\alpha} \tilde{x}'(v) \, dv \\
			& = & \dfrac{1}{\Gamma(1-\alpha)} \int_{\tilde{a} \tau}^{u \tau} (u-\dfrac{s}{\tau})^{-\alpha} \tilde{x}'(\dfrac{s}{\tau}) \, \dfrac{ds}{\tau}, \quad s = v \tau   \\
			& = & \dfrac{\tau^{\alpha-1}}{\Gamma(1-\alpha)} \int_a^{u \tau} (u \tau - s)^{-\alpha} ( \tau x'(s) ) \, ds \\
			& = & \dfrac{\tau^{\alpha}}{\Gamma(1-\alpha)} \int_a^{t} (t - s)^{-\alpha} x'(s) \, ds, \quad t = u \tau \\
			& = & \tau^\alpha \Drl{a}{t}{\alpha} x(t). 
\end{eqnarray*}

The two methods are related through the following result.
\begin{theorem}
If the fractional operator $d^\alpha / dt^\alpha$ respects condition \eqref{D}, then the two methods are equivalent: 
$x \, : \, t \mapsto x(t)$ is solution of \eqref{EL2} if and only if $\tilde{x} \, : \, u \mapsto x(u \tau)$ is solution of \eqref{EL1}. 
\end{theorem}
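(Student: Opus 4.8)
The plan is to fix the change of variable $t = u\tau$, under which $u$ ranges over $[\tilde a, \tilde b]$ precisely when $t$ ranges over $[a,b]$, and to show that equation \eqref{EL1} written for $\tilde x$ coincides, term by term, with equation \eqref{EL2} written for $x$ once both are evaluated at $t = u\tau$. Since by hypothesis $\tilde x(u) = x(u\tau)$, the two members of the equivalence are tied together by this single relation, and the argument reduces to a careful rewriting of each term of \eqref{EL1}.

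First I would record how the partial derivatives of $L_n$ relate to those of $L$. From $L_n(x,y,u) = L(x, y/\tau, \tau u)$ the chain rule gives $\partial_1 L_n = \partial_1 L$ and $\partial_2 L_n = \tau^{-1}\partial_2 L$, each evaluated at the point $(x, y/\tau, \tau u)$. Evaluating along the trajectory $\tilde x$ and invoking condition \eqref{D} in the form $\dfrac{d^\alpha}{du^\alpha}\tilde x(u) = \tau^\alpha \dfrac{d^\alpha}{dt^\alpha}x(t)$ at $t = u\tau$, the second slot $y/\tau$ of $L$ becomes exactly $\tau^{\alpha-1}\dfrac{d^\alpha}{dt^\alpha}x(t)$, i.e. the homogeneous operator of the second method. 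Hence the first term $\partial_1 L_n\bigl(\tilde x(u), \tfrac{d^\alpha}{du^\alpha}\tilde x(u), u\bigr)$ already equals the first term of \eqref{EL2} at $t = u\tau$.

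For the second term I would apply condition \eqref{D} a second time, now to the function $P(t) := \partial_2 L\bigl(x(t), \tau^{\alpha-1}\tfrac{d^\alpha}{dt^\alpha}x(t), t\bigr)$. By the computation of the partials, $\partial_2 L_n$ evaluated along $\tilde x$ equals $\tau^{-1}P(u\tau) = \tau^{-1}\tilde P(u)$ with $\tilde P(u) = P(u\tau)$. Applying \eqref{D} to $\tilde P$ and using the linearity of $d^\alpha/du^\alpha$ to pull out the constant $\tau^{-1}$ yields $\dfrac{d^\alpha}{du^\alpha}\partial_2 L_n(\cdots) = \tau^{-1}\cdot\tau^\alpha \dfrac{d^\alpha}{dt^\alpha}P(t) = \tau^{\alpha-1}\dfrac{d^\alpha}{dt^\alpha}P(t)$ at $t = u\tau$, which is precisely the second term of \eqref{EL2}. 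Subtracting, \eqref{EL1} at the point $u$ is identical to \eqref{EL2} at the point $t=u\tau$, and since $u\mapsto u\tau$ is a bijection from $[\tilde a, \tilde b]$ onto $[a,b]$, ``\eqref{EL1} holds for every $u$'' is equivalent to ``\eqref{EL2} holds for every $t$'', giving the asserted equivalence in both directions at once.

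The chain-rule bookkeeping is routine; the step I expect to be the main obstacle is the \emph{second} application of \eqref{D}, since it is applied not to $x$ but to the composite momentum $P = \partial_2 L(\cdots)$. One must first check that $P \in C^\infty([a,b])$, so that \eqref{D} is legitimately applicable, which is guaranteed because the operator $\mc{D}$ was assumed to map $C^\infty([a,b])$ into itself. One must also track the two distinct scaling factors, the $\tau^\alpha$ coming from \eqref{D} and the $\tau^{-1}$ coming from $\partial_2 L_n$, and verify that they combine into $\tau^{\alpha-1}$ rather than cancelling or doubling; this matching of exponents is exactly what makes the two homogeneous embeddings coincide.
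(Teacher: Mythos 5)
Your proposal is correct and follows essentially the same route as the paper: compute $\partial_1 L_n$ and $\partial_2 L_n$ via the chain rule, then use condition \eqref{D} to identify the terms of \eqref{EL1} with those of \eqref{EL2} under $t = u\tau$. You actually make explicit a step the paper leaves implicit, namely the second application of \eqref{D} (plus homogeneity of the operator under constant factors) to the momentum $P(t) = \partial_2 L\bigl(x(t), \tau^{\alpha-1}\tfrac{d^\alpha}{dt^\alpha}x(t), t\bigr)$, which is indeed needed to match the second terms.
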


\begin{proof}

Partial derivatives of $L$ and $L_n$ verify
\begin{eqnarray}
\partial_1 L_n(x,y,u) & = & \partial_1 L \left( x,\dfrac{y}{\tau}, u \tau \right), \label{D1} \\
\partial_2 L_n(x,y,u) & = & \dfrac{1}{\tau} \, \partial_2 L \left( x, \dfrac{y}{\tau}, u \tau \right). \label{D2}
\end{eqnarray}

From \eqref{D}, \eqref{D1} and \eqref{D2}, we infer
\begin{eqnarray*}
\partial_1 L_n \left( \tilde{x}(u), \dfrac{d^\alpha}{du^\alpha} \tilde{x}(u),u \right) & = & \partial_1 L \left( x(t), \tau^{\alpha-1} \dfrac{d^\alpha}{dt^\alpha} x(t), t \right), \\
\partial_2 L_n \left( \tilde{x}(u), \dfrac{d^\alpha}{du^\alpha} \tilde{x}(u),u \right) & = & \dfrac{1}{\tau} \, \partial_2 L \left( x(t), \tau^{\alpha-1} \dfrac{d^\alpha}{dt^\alpha} x(t), t \right),
\end{eqnarray*} 

which proves that \eqref{EL1} and \eqref{EL2} are similar.

\end{proof}

Therefore, if condition \eqref{D} is verified, nondimensionalized solution $u \in [a / \tau, b / \tau] \mapsto \tilde{x}(u)$ of \eqref{EL1} can be redimensionalized into $t \in [a,b] \mapsto \tilde{x}(t / \tau)$, solution of the dimensionalized and equivalent equation \eqref{EL2}.

%%%%%%%%%%%%%%%%%%%%%%%%%%%%%%%%%%%%%%%%%%%%%%%%%%%%%%%%%%%%%%%%%%%%%%%%%%%%%%%%%%%%%%%%%%%%%%%%%%%%%%%%%%%%%%

\subsection{Fractional constants}

In various litterature dealing with fractional equations using the operator $d^\alpha / dt^\alpha$ (homogeneous to $T^{-\alpha}$), homogeneity is sometimes preserved by introducing \emph{fractional constants} (see \cite{Metzler_Klafter} for example). For a wide class of Lagrangians, the $d^\alpha / dt^\alpha$-embedding of a modified Lagrangian leads to a homogeneous fractional equation involving fractional constants.

We consider the same Lagrangian system as above. The temporal dimension of $L$ is here denoted $T^{n_0}$ ($n_0=-2$ in most of the cases).
We suppose that $L$ is equal to its Laurent series in the variable $v$,
\begin{equation} \label{hypo}
L(x,v,t) = \sum_{i \in \mathbb{Z}} a_i \, f_i(x,t) \, v^i, 
\end{equation} 
where $[a_i] = T^{i+n_0}$. The functions $f_i$ have hence no temporal dimension.

Using the fractional operator $\tau^{\alpha-1} d^\alpha / dt^\alpha$, we obtain for any function $x \in C^\infty([a,b])$
\begin{align}
L \left( x(t), \, \tau^{\alpha-1} \dfrac{d^\alpha}{dt^\alpha} x(t), \,t \right) & = \sum_{i \in \mathbb{Z}} a_i \, f_i(x,t) \, \left( \tau^{\alpha-1} \dfrac{d^\alpha}{dt^\alpha} x(t) \right)^i \nonumber \\
		& = \sum_{i \in \mathbb{Z}} a_i \, f_i(x,t) \tau^{i(\alpha-1)} \left( \dfrac{d^\alpha}{dt^\alpha} x(t) \right)^i \nonumber \\
		& = \sum_{i \in \mathbb{Z}} \bar{a}_i \, f_i(x,t) \left( \dfrac{d^\alpha}{dt^\alpha} x(t) \right)^i, \quad \text{with } \bar{a}_i = a_i \, \tau^{i(\alpha-1)}. \nonumber
\end{align} 

The dimension of $\bar{a}_i$ is $[\bar{a}_i] = T^{i\alpha + n_0}$. With this fractional dimension, those coefficients can be considered as \emph{fractional constants}.

We can form a new Lagrangian with those constants.
\begin{definition}
For a Lagrangian $L(x,v,t) = \sum_{i \in \mathbb{Z}} a_i \, f_i(x,t) \, v^i$, the fractional Lagrangian, denoted $L_f$, is defined by
\begin{equation}
L_f(x,v,t) = \sum_{i \in \mathbb{Z}} \bar{a}_i \, f_i(x,t) \, v^i. \nonumber
\end{equation} 
\end{definition}

The $d^\alpha / dt^\alpha$-embedding of $L_f$, denoted $\hat{L}_f$, verifies
\begin{equation}
\begin{array}{cccl}
\hat{L}_f \, : & C^\infty([a,b]) & \longrightarrow & \quad C^\infty([a,b]) \\
             &       x		& \longmapsto    &  \displaystyle{\sum_{i \in \mathbb{Z}}} \bar{a}_i f_i(x(\bullet),\bullet) \left(\dfrac{d^\alpha}{dt^\alpha} x(\bullet) \right)^i. \nonumber
\end{array} 
\end{equation}   

Hence we have
\begin{equation} \label{link23}
\hat{L} \left( \tau^{\alpha-1} \dfrac{d^\alpha}{dt^\alpha} \right) = \hat{L}_f \left( \dfrac{d^\alpha}{dt^\alpha} \right), 
\end{equation} 

which leads to
\begin{theorem}
The Euler-Lagrange equation associated with the $d^\alpha / dt^\alpha$-embedding of $L_f$ is given, in the causal case, by
\begin{equation} \label{EL3}
\forall t \in [a,b], \quad \sum_{i \in \mathbb{Z}} \bar{a}_i \partial_1 f_i(x,t) \left(\dfrac{d^\alpha}{dt^\alpha} x(t) \right)^i - i\, \bar{a}_i \, \dfrac{d^\alpha}{dt^\alpha} \left( f_i(x,t) \dfrac{d^\alpha}{dt^\alpha} x(t) \right)^{i-1}  = 0,
\end{equation} 
with $x \in C^\infty([a,b])$.
\end{theorem}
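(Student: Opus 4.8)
The plan is to read \eqref{EL3} as nothing but the general causal Euler-Lagrange equation \eqref{EL_gen} specialized to the Lagrangian $L_f$ and the operator $\mc{D} = d^\alpha / dt^\alpha$. The theorem preceding \eqref{EL_gen} already guarantees that $\mc{A}(\hat{L}_f)$ is extremal in $x$, in the causal sense, precisely when
\[
\partial_1 L_f \left( x(t), \dfrac{d^\alpha}{dt^\alpha} x(t), t \right) - \dfrac{d^\alpha}{dt^\alpha} \, \partial_2 L_f \left( x(t), \dfrac{d^\alpha}{dt^\alpha} x(t), t \right) = 0 .
\]
Thus the whole task reduces to computing the two partial derivatives of $L_f$ explicitly from its Laurent form and substituting them into this identity; no new variational argument is needed, since the coherence of the embedding has been established in full generality.

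First I would differentiate the series $L_f(x,v,t) = \sum_{i \in \mathbb{Z}} \bar{a}_i \, f_i(x,t) \, v^i$ termwise in each slot. Differentiating in the first argument leaves the powers of $v$ untouched and acts only on the coefficient functions, while differentiating in the second argument lowers each exponent and brings it down as a factor, giving
\[
\partial_1 L_f(x,v,t) = \sum_{i \in \mathbb{Z}} \bar{a}_i \, \partial_1 f_i(x,t) \, v^i, \qquad \partial_2 L_f(x,v,t) = \sum_{i \in \mathbb{Z}} i \, \bar{a}_i \, f_i(x,t) \, v^{i-1}.
\]
Next I would evaluate both expressions along the embedded trajectory by setting $v = \frac{d^\alpha}{dt^\alpha} x(t)$ and insert the result into the causal Euler-Lagrange identity above. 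The first partial reproduces the first sum of \eqref{EL3} verbatim, while applying $\frac{d^\alpha}{dt^\alpha}$ to $\partial_2 L_f$ produces the second family of terms. As a consistency check, one may also invoke \eqref{link23}: since $\hat{L}(\tau^{\alpha-1} d^\alpha/dt^\alpha)$ and $\hat{L}_f(d^\alpha/dt^\alpha)$ are the same operator, their actions and hence their Euler-Lagrange equations coincide, so \eqref{EL3} must be equivalent to \eqref{EL2}.

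The only genuine subtlety is analytic rather than algebraic: the termwise differentiation of the Laurent series, and later the commutation of the fractional operator $\frac{d^\alpha}{dt^\alpha}$ with the infinite summation, must be justified. This is exactly where hypothesis \eqref{hypo} — that $L$, and therefore $L_f$, truly coincides with its Laurent series in $v$ — is used: within the domain of convergence the series may be differentiated term by term, and under the standing smoothness assumptions the linear operator $\frac{d^\alpha}{dt^\alpha}$ passes through the sum. Once these interchanges are granted, \eqref{EL3} follows by direct substitution with no further computation, so I expect the main obstacle, if any, to lie in securing the convergence and the legitimacy of these interchanges rather than in the formal manipulation itself.
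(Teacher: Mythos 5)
Your proposal is correct and follows exactly the route the paper intends (the paper gives no explicit proof here, but \eqref{EL3} is plainly just the general causal Euler--Lagrange equation \eqref{EL_gen} specialized to $L_f$ with $\mc{D}=d^\alpha/dt^\alpha$, after termwise differentiation of the Laurent series). Your computation of $\partial_2 L_f$ also shows that the second term of \eqref{EL3} should read $\dfrac{d^\alpha}{dt^\alpha}\left( f_i(x,t)\left(\dfrac{d^\alpha}{dt^\alpha}x(t)\right)^{i-1}\right)$, i.e.\ the exponent $i-1$ applies only to the fractional derivative of $x$ and not to the whole product as the paper's typesetting suggests --- a typo in the statement, not a flaw in your argument.
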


With relation \eqref{link23}, we infer the following result.
\begin{theorem}
If condition \eqref{hypo} is verified, the $\left( \tau^{\alpha-1} d^\alpha / dt^\alpha \right)$-embedding of $L_f$ and the $d^\alpha / dt^\alpha$-embedding of $L_f$ are equivalent,

\begin{eqnarray}
EL \left( \mc{E} \left( L, \tau^{\alpha-1} \dfrac{d^\alpha}{dt^\alpha}\right) \right) & \equiv & EL \left( \mc{E} \left( L_f, \dfrac{d^\alpha}{dt^\alpha}\right) \right), \label{equiv1} \\
\mc{E} \left( EL(L), \tau^{\alpha-1} \dfrac{d^\alpha}{dt^\alpha} \right) & \equiv & \mc{E} \left( EL(L_f), \dfrac{d^\alpha}{dt^\alpha} \right). \label{equiv2}
\end{eqnarray}

In the causal case, we have besides \eqref{equiv1} $\equiv$ \eqref{equiv2}.
\end{theorem}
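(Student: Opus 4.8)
The plan is to read the statement as a commutative-diagram chase resting on two facts already available: the operator identity \eqref{link23} and the coherence theorem proved above. I would establish \eqref{equiv1} by a direct substitution, then obtain \eqref{equiv2} together with the cross-equivalence \eqref{equiv1} $\equiv$ \eqref{equiv2} by applying the coherence theorem in each of the two columns.

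For \eqref{equiv1} I would argue that \eqref{link23}, which equates the two embedded Lagrangians as operators, propagates to the level of the Euler--Lagrange equations. Writing $L(x,v,t)=\sum_{i} a_i f_i(x,t) v^i$, one has $\partial_1 L = \sum_i a_i (\partial_1 f_i) v^i$ and $\partial_2 L = \sum_i i\, a_i f_i v^{i-1}$. Substituting $v = \tau^{\alpha-1} d^\alpha/dt^\alpha x$ and letting the outer operator $\tau^{\alpha-1} d^\alpha/dt^\alpha$ of \eqref{EL2} act on $\partial_2 L$, the factor $\tau^{(i-1)(\alpha-1)}$ produced inside each monomial combines with the outer $\tau^{\alpha-1}$ through the identity $\tau^{(i-1)(\alpha-1)}\,\tau^{\alpha-1}=\tau^{i(\alpha-1)}$, so that every coefficient $a_i$ is replaced uniformly by $\bar a_i = a_i \tau^{i(\alpha-1)}$. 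Equation \eqref{EL2} then coincides term by term with \eqref{EL3}, which is exactly \eqref{equiv1}; in effect \eqref{equiv1} is nothing but \eqref{link23} carried through the Euler--Lagrange functor.

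To deduce \eqref{equiv2} and the vertical equivalence, I would invoke the coherence theorem twice, once in each column. Applied with $\mc{D}=\tau^{\alpha-1} d^\alpha/dt^\alpha$ to $L$ it yields $EL(\mc{E}(L,\tau^{\alpha-1} d^\alpha/dt^\alpha)) \equiv \mc{E}(EL(L),\tau^{\alpha-1} d^\alpha/dt^\alpha)$, identifying the left member of \eqref{equiv1} with the left member of \eqref{equiv2}. Applied with $\mc{D}= d^\alpha/dt^\alpha$ to $L_f$ it yields $EL(\mc{E}(L_f, d^\alpha/dt^\alpha)) \equiv \mc{E}(EL(L_f), d^\alpha/dt^\alpha)$, identifying the right members. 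These two identifications are precisely the assertion \eqref{equiv1} $\equiv$ \eqref{equiv2}; combining them with \eqref{equiv1} gives \eqref{equiv2} by transitivity.

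The computations themselves are routine, and I expect the only point requiring attention to be the bookkeeping of the powers of $\tau$ in \eqref{equiv1}: one must check that the single extra factor $\tau^{\alpha-1}$ generated when the outer fractional operator in \eqref{EL2} is applied matches exactly the deficit between $\tau^{(i-1)(\alpha-1)}$ and $\tau^{i(\alpha-1)}$, so that the passage $a_i \mapsto \bar a_i$ is the same in every term, including the differentiated one. The second, more structural, hypothesis is that we work in the causal case: it is only there that the coherence theorem guarantees $EL$ and $\mc{E}$ commute, which is what makes the two columns collapse and the four equations coincide.
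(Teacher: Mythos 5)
Your argument is correct and follows essentially the paper's own (largely implicit) route: the paper merely remarks that the theorem is ``inferred'' from \eqref{link23}, and your power-of-$\tau$ bookkeeping ($\tau^{(i-1)(\alpha-1)}\cdot\tau^{\alpha-1}=\tau^{i(\alpha-1)}$, turning every $a_i$ into $\bar a_i$) is exactly the computation that backs this up, with the coherence theorem supplying the cross-equivalence \eqref{equiv1} $\equiv$ \eqref{equiv2} in the causal case. One small caveat: because you derive \eqref{equiv2} by invoking coherence, which the paper guarantees only in the causal case, you have established \eqref{equiv2} only under causality, whereas the same substitution applied directly to $\mc{E}\left(EL(L),\tau^{\alpha-1}d^\alpha/dt^\alpha\right)$ and $\mc{E}\left(EL(L_f),d^\alpha/dt^\alpha\right)$ — which is in fact the very formula your computation produces — yields \eqref{equiv2} unconditionally, so it is cleaner to present that computation as the proof of \eqref{equiv2} and reserve coherence solely for the final equivalence between the two lines.
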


One verifies that equation \eqref{EL3} is homogeneous: inhomogenity of the operator $d^\alpha / dt^\alpha$ is counterbalanced by the fractional constants $\bar{a}_i$.
However, the homogeneity problem is here only shifted: $L_f \left( x(t), d/dt \, x(t), t \right)$ and $EL(L_f)$ are now inhomogeneous. 
Furthermore, the physical relevance of $L_f$ is discutable, and this Lagrangian may appear as an \emph{ad hoc} construction to obtain homogeneous fractional equations. In this sense, this third method does not fit with an underlying idea of the embedding theory. For any Lagrangian system, its Lagrangian is independant of the dynamics: it is only the choice of the evolution operator, i.e. the temporal derivative, which conditions the dynamics of the system.

Finally, it seems that the second method is the most relevant homogeneous fractional embedding.

%%%%%%%%%%%%%%%%%%%%%%%%%%%%%%%%%%%%%%%%%%%%%%%%%%%%%%%%%%%%%%%%%%%%%%%%%%%%%%%%%%%%%%%%%%%%%%%%%%%%%%%%%%%%%%
%%%%%%%%%%%%%%%%%%%%%%%%%%%%%%%%%%%%%%%%%%%%%%%%%%%%%%%%%%%%%%%%%%%%%%%%%%%%%%%%%%%%%%%%%%%%%%%%%%%%%%%%%%%%%%

\section{Natural Lagrangian systems} \label{natural}

%%%%%%%%%%%%%%%%%%%%%%%%%%%%%%%%%%%%%%%%%%%%%%%%%%%%%%%%%%%%%%%%%%%%%%%%%%%%%%%%%%%%%%%%%%%%%%%%%%%%%%%%%%%%%%

\subsection{General case}

An illustration of those methods is given here with the most widespread type of Lagrangians: the natural ones. 

\begin{definition}
We consider an autonomous Langrangian system with Lagrangian $L(x,v)$, evolving on a temporal interval $[a,b]$. $L$ is a natural Lagrangian if it can be written as
\begin{equation} \label{lag_nat}
L(x,v) = \dfrac{1}{2}m v^2 - V(x),
\end{equation} 
where $m$ is the mass of the system, $\frac{1}{2}m v^2$ is the kinetic energy, and $V(x)$ the potential energy. The temporal dimension of $L$ is $T^{-2}$.
\end{definition}

We introduce a time constant $\tau$, and we apply now the three embeddings to those systems.

\subsubsection{Temporal nondimensionalization}

We denote $\tilde{a} = a / \tau$ and $\tilde{b} = b / \tau$.

The new Lagrangian is
\begin{equation}
L_n(x,y) = L \left( x, \dfrac{y}{\tau} \right) = \dfrac{1}{2 \tau^2} m y^2 - V(x), \nonumber
\end{equation}  

and its embedding verifies
\begin{equation}
\forall x \in C^\infty([\tilde{a}, \tilde{b}]), \, \forall u \in [\tilde{a}, \tilde{b}], \quad \hat{L}_n(x)(u) = \dfrac{1}{2 \tau^2} m \left( \dfrac{d^\alpha}{du^\alpha} x(u) \right)^2 - V \left( x(u) \right). \nonumber
\end{equation} 

The associated action is
\begin{align}
\mc{A}(\hat{L}_n)(x) & = \int_{\tilde{a}}^{\tilde{b}} L_n \left( x(u), \dfrac{d^\alpha}{du^\alpha} x(u) \right) \, du \nonumber \\                              
		& = \int_{\tilde{a}}^{\tilde{b}} \dfrac{1}{2 \tau^2} m \left( \dfrac{d^\alpha}{du^\alpha} x(u) \right) ^2 - V \left( x(u) \right) \, du, \nonumber
\end{align} 

and the Euler-Lagrange equation, in the causal case, turns into
\begin{equation}
\forall u \in [\tilde{a}, \tilde{b}], \quad \dfrac{m}{\tau^2} \dfrac{d^{2 \alpha}}{du^{2 \alpha}} x(u) + V' \left( x(u) \right) = 0. \nonumber
\end{equation}

\subsubsection{Homogeneous fractional derivative}

The initial Lagrangian remains inchanged. 
Its embedding is now
\begin{equation}
\forall x \in C^\infty([a,b), \, \forall t \in [a,b], \quad \hat{L}_h(x)(t) = \dfrac{1}{2} m \tau^{2(\alpha-1)} \left( \dfrac{d^\alpha}{dt^\alpha} x(t) \right)^2 - V \left( x(t) \right), \nonumber
\end{equation}

the action is given by
\begin{align}
\mc{A}(\hat{L}_h)(x) & = \int_a^b L \left( x(t), \tau^{\alpha-1} \dfrac{d^\alpha}{dt^\alpha} x(t), t \right) \, dt \nonumber \\
                & = \int_a^b \dfrac{1}{2}m \tau^{2\alpha-2} \left( \dfrac{d^\alpha}{dt^\alpha} x(t) \right)^2 - V \left( x(t) \right) \, dt, \nonumber
\end{align} 

and the causal Euler-Lagrange equation is
\begin{equation} \label{EL2_nat}
\forall t \in [a,b], \quad m \tau^{2 (\alpha - 1)} \dfrac{d^{2 \alpha}}{dt^{2 \alpha}} x(t) + V' \left( x(t) \right) = 0.
\end{equation}

\subsubsection{With fractional constants}

Let $x_0$ be a position such as $V(x_0) \neq 0$. The potential energy can hence be written as $V(x) = a_0 \, f_0(x,t)$, with $a_0 = V(x_0)$, which temporal dimension is $T^{-2}$, and $f_0(x,t) = V(x) / V(x_0)$, which is dimensionless.

The Lagrangian \eqref{lag_nat} can therefore be expressed in the form \eqref{hypo}, with
\begin{itemize}
\item $a_i = 0$ for $i < 0$,
\item $a_0 = V(x_0)$, $f_0(x,t) = \dfrac{V(x)}{V(x_0)}$,
\item $a_1 = 0$,
\item $a_2 = m$, $f_2(x,t) = \dfrac{1}{2}$,
\item $a_j = 0$ for $j > 2$.
\end{itemize}

The fractional constants are 
\begin{itemize}
\item $\bar{a}_0 = a_0 = V(x_0)$,
\item $\bar{a}_2 = a_2 \, \tau^{2(\alpha-1)} = m \, \tau^{2(\alpha-1)}$,
\end{itemize}

and lead to the new fractional Lagrangian
\begin{equation}
L_f(x,v) = \dfrac{1}{2} m \, \tau^{2(\alpha-1)} v^2 - V(x). \nonumber
\end{equation} 

The associated action is defined by
\begin{equation}
\mc{A}(\hat{L}_f)(x) = \int_a^b  \dfrac{1}{2} m \tau^{2(\alpha-1)} \left( \dfrac{d^\alpha}{dt^\alpha} x(t) \right)^2 - V(x(t)) \, dt. \nonumber
\end{equation} 

Its extremum provides the following Euler-Lagrange equation:
\begin{equation}
\forall t \in [a,b], \quad m \tau^{2 (\alpha - 1)} \dfrac{d^{2 \alpha}}{dt^{2 \alpha}} x(t) + V' \left( x(t) \right) = 0, \nonumber
\end{equation} 

which is similar to \eqref{EL2_nat}.

Now, let us solve those equations in the case of the harmonic oscillator.

%%%%%%%%%%%%%%%%%%%%%%%%%%%%%%%%%%%%%%%%%%%%%%%%%%%%%%%%%%%%%%%%%%%%%%%%%%%%%%%%%%%%%%%%%%%%%%%%%%%%%%%%%%%%%%

\subsection{Harmonic oscillator} \label{sec_osc}

This system has already been studied in the fractional case (see \cite{Stan, Zasl_Stan, Zasl_Tarasov}).

The harmonic oscillator is a natural Lagrangian system which Lagrangian $L$ is given by:
\begin{equation} \label{lag_osc}
L(x,v) = \dfrac{1}{2}m v^2 - \dfrac{1}{2}k x^2,
\end{equation} 
with $[x] = L$, $[v] = L \, T^{-1}$, $[m] = M$ et $[k] = M \, T^{-2}$.

As above, we introduce a time constant $\tau$ (which is \emph{a priori} not linked with $\sqrt{m / k}$).
We arbitrarily choose here the Caputo derivative for the fractional operator $d^\alpha / dt^\alpha$.

\subsubsection{Temporal nondimensionalization}

The nondimensionalized Lagrangian $L_n$ is:
\begin{equation}
L_n(x,y) = \dfrac{1}{2 \tau^2}m y^2 - \dfrac{1}{2}k x^2. \nonumber
\end{equation} 

Its embedding, $\hat{L}_n(x)(u) = \frac{1}{2 \tau^2} m \left( d^\alpha / du^\alpha \, x(u) \right)^2 - \frac{1}{2} k x(u)^2$ leads to the following Euler-Lagrange equation:
\begin{equation}
\forall u \in [\tilde{a}, \tilde{b}], \quad \dfrac{m}{\tau^2} \dfrac{d^{2 \alpha}}{du^{2 \alpha}} x(u) + k \, x(u) = 0. \nonumber
\end{equation} 

With the usual notation $\omega = \sqrt{k / m}$, we obtain
\begin{equation} 
\forall u \in [\tilde{a}, \tilde{b}], \quad \dfrac{d^{2 \alpha}}{du^{2 \alpha}} x(u) + (\omega \tau)^2 x(u) = 0. \nonumber
\end{equation} 

The (nondimensionalized) solution, denoted $x_n$, is given by
\begin{equation}
\forall u \in [\tilde{a}, \tilde{b}], \quad x_n(u) = x_n(\tilde{a}) E_{2 \alpha} \left( - (\omega \tau)^2 (u-\tilde{a})^{2 \alpha} \right), \nonumber
\end{equation} 

where, for $\lambda > 0$, $E_{\lambda}$ is the Mittag-Leffler function defined on $\mathbb{C}$ by
\begin{equation}
\forall z \in \mathbb{C}, \quad E_{\lambda}(z) = \sum_{k=0}^\infty \frac{z^k}{\Gamma(\lambda k + 1)}. \nonumber
\end{equation} 

The dimensionalized solution, denoted $x_d$, is hence
\begin{equation}
\forall t \in [a,b], \quad x_d(t) = x_n(\dfrac{t}{\tau}) = x_d(a) E_{2 \alpha} \left( - \omega^2 \tau^{2(1-\alpha)} (t- a)^{2 \alpha} \right). \nonumber
\end{equation} 

One verifies that the argument of the Mittag-Leffler function is dimensionless.

\subsubsection{Homogeneous fractional derivative}

The embedded Lagrangian $\hat{L}_h$, given by
\begin{equation}
\hat{L}_h = \dfrac{1}{2} m \tau^{2(\alpha-1)} \left( \dfrac{d^\alpha}{dt^\alpha} x(t) \right)^2 - k \, x(t)^2, \nonumber
\end{equation} 
provides the equation
\begin{equation} \label{EL_osc_2}
\forall t \in [a,b], \quad m \tau^{2 (\alpha - 1)} \dfrac{d^{2 \alpha}}{dt^{2 \alpha}} x(t) + k \, x(t) = 0.
\end{equation} 

Introducing $\omega$, we obtain
\begin{equation} 
\forall t \in [a,b], \quad \dfrac{d^{2 \alpha}}{dt^{2 \alpha}} x(t) + \omega^2 \tau^{2(1-\alpha)} x(t)  = 0, \nonumber
\end{equation} 

which admits the solution $x_h$ defined by
\begin{equation}
x_h(t) = x_h(a) E_{2 \alpha} \left( - \omega^2 \tau^{2(1-\alpha)} (t- a)^{2 \alpha} \right). \nonumber
\end{equation} 

One verifies that $x_n = x_h$.

\subsubsection{With fractional constants}

The Lagrangian \eqref{lag_osc} can be written as \eqref{hypo}, with
\begin{itemize}
\item $a_0 = k$, $f_0(x,t) = \frac{1}{2} x^2$,
\item $a_1 = 0$,
\item $a_2 = m$, $f_2(x,t) = \frac{1}{2}$,
\item $a_i = 0$ for $i \geq 3$.
\end{itemize}

The fractional constants are given by
\begin{itemize}
\item $\bar{a}_0 = k$,
\item $\bar{a}_2 = a_2 \, \tau^{2(\alpha-1)} = m \, \tau^{2(\alpha-1)}$.
\end{itemize}

The associated fractional Lagrangian, $L_f$, is
\begin{equation}
L_f(x,v) = \frac{1}{2} m \tau^{2(\alpha-1)} v^2 - \frac{1}{2} k \, x^2. \nonumber
\end{equation}  

We obtain the Euler-Lagrange equation,
\begin{equation}
\forall t \in [a,b], \quad m \tau^{2(\alpha-1)} \dfrac{d^{2 \alpha}}{dt^{2 \alpha}} x(t) + k \, x(t) = 0, \nonumber
\end{equation} 

which is similar to equation \eqref{EL_osc_2}.

\subsubsection{Comparison with the inhomogeneous fractional embedding}

If we directly apply the fractional embedding presented in \cite{Cresson} to the Lagrangian \eqref{lag_osc}, we obtain
\begin{equation} \label{osc_inhom}
\dfrac{d^{2 \alpha}}{dt^{2 \alpha}} x(t) + \omega^2 x(t) = 0.
\end{equation} 

The temporal dimension of $d^{2 \alpha} / dt^{2 \alpha} \, x(t)$ is $T^{-2\alpha}$, whereas for $\omega^2 x(t)$, it is $T^{-2}$.

The solution $x_i$ of \eqref{osc_inhom} is
\begin{equation}
x_i(t) = x_i(a) E_{2 \alpha} \left( - \omega^2 (t- a)^{2 \alpha} \right). \nonumber
\end{equation} 

The argument of the Mittag-Leffler function is $T^{2(\alpha-1)}$, whereas it should be dimensionless.

%%%%%%%%%%%%%%%%%%%%%%%%%%%%%%%%%%%%%%%%%%%%%%%%%%%%%%%%%%%%%%%%%%%%%%%%%%%%%%%%%%%%%%%%%%%%%%%%%%%%%%%%%%%%%%
%%%%%%%%%%%%%%%%%%%%%%%%%%%%%%%%%%%%%%%%%%%%%%%%%%%%%%%%%%%%%%%%%%%%%%%%%%%%%%%%%%%%%%%%%%%%%%%%%%%%%%%%%%%%%%

\section{Conclusion}

With the first two embeddings presented above (temporal nondimensionalization and homogeneous fractional derivative), it becomes possible to preserve homogeneity of embedded equations. Both of them lead to the same fractional equation. Therefore, from any Lagrangian system, it is possible to construct a fractional equation which respects two fundamental physical principles: homogeneity and least action principle.

To do so, a time constant $\tau$ has to be introduced. Similarly to the fractional exponent $\alpha$, this parameter is not defined by the equation itself. Contrary to the nondimensionalization of classical equations, fractional solutions depend on $\tau$ (see the example of the harmonic oscillator). This time constant is hence more than a calculus intermediary: it conditions the dynamcis of the system. Its physical relevance seems as much important as the constants of the initial equation. To sum up, a fractional equation should be characterized by two parameters: the fractional exponent $\alpha$ and the time constant $\tau$.

So as to understand the physical meaning of this characteristic time, it may be useful to set the context in which such equations arise. In \cite{Hilfer_FFD, Zasl_HCFD}, the fractional aspect of the dynamics appears only on long time scales. At a microscopic level, the dynamics is classical and evolves according to a microscopic time, but when one zooms out at a macroscopic level, ruled by a new macroscopic time, the fractional aspect may emerge. It might be through the relation between these two time scales that the time constant $\tau$ may appear.
 
%%%%%%%%%%%%%%%%%%%%%%%%%%%%%%%%%%%%%%%%%%%%%%%%%%%%%%%%%%%%%%%%%%%%%%%%%%%%%%%%%%%%%%%%%%%%%%%%%%%%%%%%%%%%%%

\section*{Acknowledgements}

The author thanks his Ph.D. director, J. Cresson, for his support and for fruitful discussions, W. Thuillot, director of the IMCCE and J.E. Arlot, in charge of the GAP section, for their hospitality.

%%%%%%%%%%%%%%%%%%%%%%%%%%%%%%%%%%%%%%%%%%%%%%%%%%%%%%%%%%%%%%%%%%%%%%%%%%%%%%%%%%%%%%%%%%%%%%%%%%%%%%%%%%%%%%

\bibliographystyle{plain}
\bibliography{Biblio_Homogeneity}

\begin{thebibliography}{10}

\bibitem{Bag_Tor_83}
R.L. Bagley and P.J. Torvik.
\newblock A theoritical basis for the application of fractional calculus in
  viscoelasticity.
\newblock {\em Journal of Rheology}, 27:201--210, 1983.

\bibitem{Bag_tor_86}
R.L. Bagley and P.J. Torvik.
\newblock On the fractional calculus model of viscoelasticity behavior.
\newblock {\em Journal of Rheology}, 30:133--155, 1986.

\bibitem{Carp_Main}
A.~Carpinteri and F.~Mainardi.
\newblock {\em Fractals and Fractional Calculus in Continuum Mechanics}.
\newblock Springer-Verlag, New York, 1997.

\bibitem{Cresson}
J.~Cresson.
\newblock Fractional embedding of differential operators and {L}agrangian
  systems.
\newblock {\em J. Math. Phys.}, 48(3):033504, 2007.

\bibitem{Hilfer_FFD}
R.~Hilfer.
\newblock Foundations of fractional dynamics.
\newblock {\em Fractals}, 3(3):549--556, 1995.

\bibitem{Hilfer_App}
R.~Hilfer.
\newblock {\em Applications of Fractional Calculus in Physics}.
\newblock World Scientific Publ. Co., Singapore, 2000.

\bibitem{Inizan_CBHF}
P.~Inizan.
\newblock Compatibility between {H}amiltonian formalisms.
\newblock {\em Int. J. Ecol. Econ. Stat.}, 9(F07):83--91, 2007.

\bibitem{Meer_Schef}
M.M. Meerschaert and H.-P. Scheffler.
\newblock Limit theorems for continuous-time random walks with infinite mean
  waiting times.
\newblock {\em J. Appl. Prob.}, 41:623--638, 2004.

\bibitem{Metzler_Klafter}
R.~Metzler and J.~Klafter.
\newblock The random walk's guide to anomalous diffusion: a fractional dynamics
  approach.
\newblock {\em Pysics Reports}, 339:1--77, 2000.

\bibitem{Samko}
S.G. Samko, Kilbas A.A., and O.I. Marichev.
\newblock {\em Fractional integrals ans derivatives: theory and applications}.
\newblock Gordon and Breach, New York, 1993.

\bibitem{Stan}
A.A. Stanislavsky.
\newblock Hamiltonian formalism of fractional systems.
\newblock {\em Eur. Phys. J. B}, 49:93--101, 2006.

\bibitem{Zasl_Tarasov}
V.E. Tarasov and G.M. Zaslavsky.
\newblock Dynamics with low-level fractionality.
\newblock {\em Physica A}, 368:399--415, 2006.

\bibitem{Zasl_HCFD}
G.M. Zaslavsky.
\newblock {\em Hamiltonian Chaos \& Fractional Dynamics}.
\newblock Oxford University Press, Oxford, 2005.

\bibitem{Zasl_Stan}
G.M. Zaslavsky, A.A. Stanislavsky, and M.~Edelman.
\newblock Chaotic and pseudochaotic attractors of perturbed fractional
  oscillator.
\newblock {\em Chaos}, 16:013102, 2006.

\end{thebibliography}

\end{document}